\documentclass[reqno,11pt]{amsart}
\usepackage{amssymb,amsmath}
\usepackage{tikz,enumerate,url}
\usepackage{color}

\evensidemargin20pt
\oddsidemargin20pt
\textwidth6in

\numberwithin{equation}{section}
\newtheorem{theorem}[equation]{Theorem}

\newtheorem{proposition}[equation]{Proposition}

\theoremstyle{definition}

\newtheorem{example}[equation]{Example}
\newtheorem*{remark}{Remark}

\title[Linear recurrence sequences with indices in arithmetic progression]{%
Linear recurrence sequences with indices in arithmetic progression and their sums}
\author{Daniel Birmajer}
\address{Department of Mathematics\\ Nazareth College\\ 4245 East Ave.\\ Rochester, NY 14618}
\author{Juan B. Gil}
\address{Penn State Altoona\\ 3000 Ivyside Park\\ Altoona, PA 16601}
\author{Michael D. Weiner}

\keywords{Linear recurrences; partial sums; generalized Fibonacci and Lucas sequences}

\begin{document}
\maketitle

\begin{abstract}
For an arbitrary homogeneous linear recurrence sequence of order $d$ with constant coefficients, we derive recurrence relations for all subsequences with indices in arithmetic progression. The coefficients of these recurrences are given explicitly in terms of partial Bell polynomials that depend on at most $d-1$ terms of the generalized Lucas sequence associated with the given recurrence. We also provide an elegant formula for the partial sums of such sequences and illustrate all of our results with examples of various orders, including common generalizations of the Fibonacci numbers.
\end{abstract}

\section{Introduction}

Let  $d$ be a positive integer and let $(a_n)$ be a sequence satisfying the recurrence relation
\begin{equation}\label{eq:recSequence}
 a_n = c_1a_{n-1}+\dots+c_da_{n-d} \;\text{ for } n\ge d, \; c_d\not=0.
\end{equation}
While it is not surprising that any subsequence of the form $(a_{mn+r})_{n\in\mathbb{N}}$, for fixed $m\in\mathbb{N}$ and $r\in\mathbb{N}_0=\mathbb{N}\cup\{0\}$, also satisfies a linear recurrence relation of order $d$, little is actually known about the structure of the coefficients of these recurrences. In this paper, we answer this question in full generality and give explicit formulas in terms of partial Bell polynomials in the coefficients $c_1,\dots,c_d$ of the original recurrence relation.

To this end, we introduce the associated sequence
\begin{equation}\label{eq:LucasTransform}
 \hat a_0=d, \quad 
 \hat a_n=\sum_{k=1}^n \frac{(k-1)!}{(n-1)!}B_{n,k}(1!c_1,2!c_2,\dots,d!c_d,0,\dots) \text{ for } n\ge 1,
\end{equation}
where $B_{n,k}=B_{n,k}(x_1,x_2,\dots)$ denotes the $(n,k)$-th partial Bell polynomial in the variables $x_1,x_2,\dots,x_{n-k+1}$. These polynomials, introduced by Bell \cite{Bell}, provide an efficient tool to work with linear recurrence sequences and their convolutions. For the definition and basic properties, see e.g.\ \cite[Section~3.3]{Comtet}.

It can be shown that $(\hat a_n)$ satisfies the same recurrence relation as $(a_n)$. For the special case of the Fibonacci sequence $(F_n)$, where $d=2$ and $c_1=c_2=1$, the associated sequence $(\hat F_n)$ is given by
\begin{equation*}
 \hat F_0=2, \quad \hat F_n=\sum_{k=1}^n \frac{(k-1)!}{(n-1)!}B_{n,k}(1!,2!,0,\dots) 
 = \sum_{k=0}^{\lfloor \frac{n}{2}\rfloor} \frac{n}{n-k}\binom{n-k}{k}.
\end{equation*}
This is precisely the Lucas sequence \cite[A000032]{Sloane}. Moreover, if $(a_n)$ is the generalized Fibonacci sequence of order $d$ (with\ $c_1=c_2=\cdots=c_d=1$), then $(\hat a_n)$ is the corresponding generalized Lucas sequence studied in \cite{HB77}. For this reason, we call the sequence defined by \eqref{eq:LucasTransform} the {\em Lucas transform} of $(c_1,\dots,c_d)$. One of the main features of $\hat a_n$ is that it can be written as
\begin{equation} \label{eq:LucasBinet}
 \hat a_n = \sum_{j=1}^d \alpha_j^n \;\text{ for } n\ge 0,
\end{equation}
where the $\alpha_j$'s are such that $(1-\alpha_1 t)\cdots(1-\alpha_d t)=1-c_1t-\cdots -c_d t^d$. The equivalence of \eqref{eq:LucasTransform} and \eqref{eq:LucasBinet} was observed by the authors in \cite{BGW14b}.

The main result of this paper (see Theorem~\ref{thm:arithmetic}) is that for an arbitrary linear recurrence sequence with constant coefficients $c_1,\dots,c_d$, as given in \eqref{eq:recSequence}, and for any fixed $m\in\mathbb{N}$ and $r\in\mathbb{N}_0$, the subsequence $(a_{mn+r})_{n\in\mathbb{N}}$ satisfies the linear recurrence relation
\begin{equation*}
 a_{mn+r} = \gamma_1\, a_{m(n-1)+r}+\gamma_2\, a_{m(n-2)+r}+\cdots+\gamma_d\, a_{m(n-d)+r} 
  \;\text{ for } n\ge d,
\end{equation*}
with $\gamma_k = \sum_{j=1}^k \frac{(-1)^{j+1}}{k!} B_{k,j}(0!\hat a_m,1!\hat a_{2m},\dots,(k-j)!\hat a_{(k-j+1)m})$ for $k=1,\dots,d$, where $(\hat a_n)$ is the Lucas transform of $(c_1,\dots,c_d)$.

In Section~\ref{sec:arithmetic}, we will prove this result and will illustrate our formula with examples of recurrences of order 2 and 3. We will also consider convolved Fibonacci sequences whose characteristic polynomials have roots of higher multiplicity. For brevity in our exposition, the number of examples discussed in this section is rather limited. However, all of the results presented in this paper are valid for homogeneous linear recurrence sequences of arbitrary order with constant coefficients over any integral domain.

In Section~\ref{sec:sums}, we turn our attention to the partial sums of a general linear recurrence sequence $(a_n)$ with characteristic polynomial $q(t)=1-c_1t-\dots-c_dt^d$, and give an elegant formula for $\sum_{j=0}^n a_j$ in terms of $a_{n+1},\dots, a_{n+d}$, see Theorem~\ref{thm:an_sum}. To this end, we first consider the sequence $(y_n)$ with generating function $1/q(t)$ and find a formula for its partial sums. The sequence $(y_n)$ is the {\sc invert} transform of $(c_1,\dots,c_d)$, and together with the sequences with generating functions $t^j/q(t)$ for $j=1,\dots,d-1$, they generate a basis for the space of linear recurrence sequences of order $d$ with coefficients $c_1,\dots,c_d$, cf.\ \cite{BGW14b} or \cite{Yang08}. The formula provided in Theorem~\ref{thm:an_sum} is carried out for several basic examples.

Because of the explicit nature of our two theorems, they can be easily combined to find formulas for sums of the form $\sum_{j=0}^n a_{mj+r}$. This is discussed at the end of Section~\ref{sec:sums} for recurrence sequences of order 2 and 3. For illustration purposes, we finish the paper with a few examples concerning the Tribonacci sequence. 
 
\section{Indices in arithmetic progression}
\label{sec:arithmetic}

Let $(a_n)$ be a sequence satisfying the recurrence relation \eqref{eq:recSequence}, and let $(\hat a_n)$ be the Lucas transform of the coefficients $(c_1,\dots,c_d)$, as defined in \eqref{eq:LucasTransform}. We start this section by showing that $\hat a_n$ admits the representation \eqref{eq:LucasBinet}. Let $\alpha_1,\dots,\alpha_d$ be defined by $(1-\alpha_1 t)\cdots(1-\alpha_d t)=1-c_1t-\cdots -c_d t^d$, and let $s_n = \sum_{j=1}^d \alpha_j^n$ for $n\ge 0$.

In \cite[Proposition~7]{BGW14b}, the authors showed that for $n\ge 1$,
\begin{equation*} 
 s_n=\sum_{k=1}^n (-1)^{n+k} \frac{(k-1)!}{(n-1)!}B_{n,k}(1!e_1,2!e_2,\dots,d!e_d,0,\dots),
\end{equation*}
where $e_1,\dots,e_d$ are the elementary symmetric functions in $\alpha_1,\dots,\alpha_d$. Since $e_j=(-1)^{j+1}c_j$ for every $j=1,\dots,d$, the homogeneity properties of the partial Bell polynomials give 
\begin{equation*} 
 B_{n,k}(1!e_1,2!e_2,\dots,d!e_d,0,\dots) = (-1)^{n+k} B_{n,k}(1!c_1,2!c_2,\dots,d!c_d,0,\dots),
\end{equation*}
which implies
\begin{equation*} 
 s_n = \sum_{k=1}^n \frac{(k-1)!}{(n-1)!}B_{n,k}(1!c_1,2!c_2,\dots,d!c_d,0,\dots) = \hat a_n \text{ for } n\ge 1.
\end{equation*}
Since $s_0=\hat a_0$, we conclude that $s_n=\hat a_n$ for all $n$, as stated in the introduction. Using the representation \eqref{eq:LucasBinet}, it is clear that $(\hat a_n)$ satisfies the same recurrence relation as $(a_n)$.

\begin{theorem}\label{thm:arithmetic}
Let $(a_n)$ be a linear recurrence sequence of order $d\ge 1$, satisfying the relation 
$a_n = c_1a_{n-1}+\dots+c_da_{n-d}$ for $n\ge d$, $c_d\not=0$. Let $(\hat a_n)$ be the Lucas transform of $(c_1,\dots,c_d)$. For any fixed $m\in\mathbb{N}$ and $r\in\mathbb{N}_0$, the subsequence $(a_{mn+r})_{n\in\mathbb{N}}$ satisfies the linear recurrence relation
\begin{equation*}
 a_{mn+r} = \gamma_1\, a_{m(n-1)+r}+\gamma_2\, a_{m(n-2)+r}+\cdots+\gamma_d\, a_{m(n-d)+r} 
 \;\text{ for } n\ge d,
\end{equation*}
where each $\gamma_k$ is given by
\begin{equation*}
 \gamma_k = \sum_{j=1}^k \frac{(-1)^{j+1}}{k!} B_{k,j}(0!\hat a_m,1!\hat a_{2m},\dots,(k-j)!\hat a_{(k-j+1)m}).
\end{equation*}
\end{theorem}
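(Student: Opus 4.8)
The plan is to identify the characteristic roots of the decimated sequence $(a_{mn+r})_n$ and then convert the resulting elementary symmetric functions into power sums via partial Bell polynomials, essentially reversing the computation that produced \eqref{eq:LucasBinet}. Throughout I would work with the $\alpha_j$ defined by $(1-\alpha_1 t)\cdots(1-\alpha_d t)=1-c_1t-\cdots-c_dt^d$, equivalently the roots of $x^d-c_1x^{d-1}-\cdots-c_d=\prod_{j=1}^d(x-\alpha_j)$, and with $\beta_j=\alpha_j^m$.

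First I would establish that $(a_{mn+r})_n$ satisfies the order-$d$ recurrence whose characteristic polynomial is $\prod_{j=1}^d(x-\alpha_j^m)$. By the Binet-type theory, every solution of \eqref{eq:recSequence} lies in the span of the functions $n\mapsto n^i\alpha_j^n$, with $i$ bounded by the multiplicity of $\alpha_j$. Under $n\mapsto mn+r$ such a function becomes $\alpha_j^r\,(mn+r)^i\,(\alpha_j^m)^n$; expanding $(mn+r)^i$ as a polynomial of degree $i$ in $n$ places it in the span of $\{\,n^{i'}(\alpha_j^m)^n : i'\le i\,\}$, which is contained in the solution space of the recurrence with characteristic polynomial $\prod_{j=1}^d(x-\alpha_j^m)$, regardless of whether the $\alpha_j$ (or the $\alpha_j^m$) are distinct. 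Writing $\prod_{j=1}^d(x-\alpha_j^m)=x^d-\gamma_1x^{d-1}-\cdots-\gamma_d$ and comparing coefficients then gives the recurrence for $n\ge d$ with
\[
\gamma_k=(-1)^{k+1}e_k(\alpha_1^m,\dots,\alpha_d^m)\quad\text{for }k=1,\dots,d.
\]

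Next I would convert these elementary symmetric functions into power sums. Using \eqref{eq:LucasBinet}, the power sums of the $\beta_j$ are $p_\ell:=\sum_{j=1}^d\beta_j^\ell=\sum_{j=1}^d\alpha_j^{m\ell}=\hat a_{m\ell}$. Taking logarithms in $\prod_{j=1}^d(1+\beta_j t)=\sum_{k\ge0}e_k(\beta)t^k$ yields
\[
\sum_{k\ge0}e_k(\beta)\,t^k=\exp\!\left(\sum_{\ell\ge1}\frac{(-1)^{\ell-1}}{\ell}\,p_\ell\,t^\ell\right).
\]
Writing the exponent as $\sum_{\ell\ge1}x_\ell\,t^\ell/\ell!$ forces $x_\ell=(-1)^{\ell-1}(\ell-1)!\,p_\ell=(-1)^{\ell-1}(\ell-1)!\,\hat a_{m\ell}$, and the standard Bell-polynomial expansion
\[
\exp\!\left(\sum_{\ell\ge1}\frac{x_\ell}{\ell!}\,t^\ell\right)=\sum_{n\ge0}\left(\sum_{j=0}^n B_{n,j}(x_1,x_2,\dots)\right)\frac{t^n}{n!}
\]
gives $e_k(\beta)=\tfrac1{k!}\sum_{j=1}^k B_{k,j}(x_1,x_2,\dots)$ for $k\ge1$, since $B_{k,0}=0$.

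Finally I would strip the alternating signs from the arguments via the homogeneity relation, which with $y_\ell=(\ell-1)!\,\hat a_{m\ell}$ reads $B_{k,j}(x_1,x_2,\dots)=(-1)^{k+j}B_{k,j}(y_1,y_2,\dots)$ because $x_\ell=(-1)^{\ell-1}y_\ell$. Substituting into $\gamma_k=(-1)^{k+1}e_k(\beta)$ collapses the accumulated sign to $(-1)^{2k+j+1}=(-1)^{j+1}$ and produces the claimed formula, with each $B_{k,j}$ depending only on the first $k-j+1$ entries $0!\hat a_m,\dots,(k-j)!\hat a_{(k-j+1)m}$. I expect the main obstacle to be the bookkeeping in these last two steps: matching the factorials and, above all, tracking the three independent sources of signs (the $(-1)^{k+1}$ relating $\gamma_k$ to $e_k$, the $(-1)^{\ell-1}$ inside the logarithm, and the $(-1)^{k+j}$ from homogeneity) so that they cancel down to the single surviving factor $(-1)^{j+1}$. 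The decimation argument of the first step is conceptually routine, and the exponential computation is the mirror image of the derivation of \eqref{eq:LucasBinet} recorded just before the theorem, so I would lean on that as a template.
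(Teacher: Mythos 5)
Your proposal is correct, and it follows the same two-step skeleton as the paper: first identify the characteristic polynomial of the decimated sequence as $\prod_j(x-\alpha_j^m)$, so that $\gamma_k=(-1)^{k+1}e_k(\alpha_1^m,\dots,\alpha_d^m)$, and then express these elementary symmetric functions in the power sums $\hat a_{m\ell}$ via partial Bell polynomials. The differences are in how each step is executed, and both are worth noting. For the first step, the paper verifies the decimated recurrence only for the single sequence $(\hat a_{mn})=\bigl(\sum_j(\alpha_j^m)^n\bigr)$ and then asserts that every $(a_{mn+r})$ satisfies it because all these sequences ``satisfy the same recurrence relation''; strictly speaking, one solution does not determine the annihilating recurrence for the whole solution space, so your direct argument through the basis $n^i\alpha_j^n$ (which also covers repeated roots, as needed for the convolved Fibonacci example) actually fills in a step the paper leaves implicit. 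For the second step, the paper reuses its forward formula (Proposition~7 of \cite{BGW14b}) expressing $\hat a_{mn}$ in Bell polynomials of the $\gamma_k$ and then inverts it by Lagrange inversion, whereas you derive the inverse relation directly from
$\sum_k e_k(\beta)t^k=\exp\bigl(\sum_{\ell\ge1}\tfrac{(-1)^{\ell-1}}{\ell}p_\ell t^\ell\bigr)$
and the complete-Bell-polynomial expansion of the exponential; your route is more self-contained and makes the sign bookkeeping ($(-1)^{k+1}$ from the characteristic polynomial, $(-1)^{\ell-1}$ from the logarithm, $(-1)^{k+j}$ from homogeneity, collapsing to $(-1)^{j+1}$) fully explicit, at the cost of redoing by hand an inversion the paper outsources to a citation. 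All the factorial and index checks in your final formula ($y_\ell=(\ell-1)!\,\hat a_{m\ell}$, last argument $(k-j)!\,\hat a_{(k-j+1)m}$) come out right.
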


\begin{proof}
Since the sequences $(a_{mn+r})_{n\in\mathbb{N}}$ and $(\hat a_{mn})_{n\in\mathbb{N}}$ satisfy the same recurrence relation, it suffices to consider the latter. Using the representation \eqref{eq:LucasBinet}, for $m\in\mathbb{N}$, we get
\begin{equation*}
 \hat a_{mn} = \sum_{j=1}^d \alpha_j^{mn} = \sum_{j=1}^d (\alpha_j^{m})^n,
\end{equation*}
thus for $n\ge d$, $(\hat a_{mn})_{n\in\mathbb{N}}$ satisfies the recurrence relation
\begin{equation*}
 \hat a_{mn} = e_1^{(m)}\hat a_{m(n-1)}-e_2^{(m)} \hat a_{m(n-2)}+\cdots+(-1)^{d+1}e_d^{(m)} \hat a_{m(n-d)},
\end{equation*}
where $e_1^{(m)},\dots,e_d^{(m)}$, are the elementary symmetric functions in $\alpha_1^m,\dots,\alpha_d^m$. 

For every $k = 1,\dots,d$, let $\gamma_k = (-1)^{k+1}e_k^{(m)}$. Once again, by  \cite[Proposition~7]{BGW14b}, we have
\begin{align*} 
 \hat a_{mn} 
  &=\sum_{k=1}^n (-1)^{n+k} \frac{(k-1)!}{(n-1)!}B_{n,k}(1!e_1^{(m)},2!e_2^{(m)},\dots,d!e_d^{(m)},0,\dots)\\
  &=\sum_{k=1}^n \frac{(k-1)!}{(n-1)!}B_{n,k}(1!\gamma_1,2!\gamma_2,\dots,d!\gamma_d,0,\dots),
\end{align*}
and therefore
\begin{equation*}
 (n-1)! \hat a_{mn} = \sum_{k=1}^n (k-1)! B_{n,k}(1!\gamma_1,2!\gamma_2,\dots,d!\gamma_d,0,\dots).
\end{equation*}
Finally, Lagrange inversion gives
\begin{equation*}
 \gamma_k = \sum_{j=1}^k \frac{(-1)^{j+1}}{k!} B_{k,j}(0!\hat a_m,1!\hat a_{2m},\dots,(k-j)!\hat a_{(k-j+1)m}).
\end{equation*}
This proves the claimed recurrence relation for the sequence $(\hat a_{mn})_{n\in\mathbb{N}}$, and therefore for any sequence of the form $(a_{mn+r})_{n\in\mathbb{N}}$.
\end{proof}

\begin{remark}
Clearly, $\gamma_1=\hat a_m$, and since $\gamma_d=(-1)^{d+1}e_d^{(m)}$, we have $\gamma_d = (-1)^{(d+1)(m+1)} c_d^m$.
\end{remark}

Here is a basic example:

\begin{example}[$k$-Fibonacci] \label{ex:kFibonacci}
For $k\in\mathbb{N}$ let $(F_{k,n})_{n\in\mathbb{N}}$ be the sequence defined by
\[ F_{k,0}=0,\; F_{k,1}=1, \,\text{ and }\, F_{k,n+1}=kF_{k,n}+F_{k,n-1} \; \text{ for } n\ge 1. \]
In this case, $(\widehat F_{k,n})_{n\in\mathbb{N}}$ is the $k$-Lucas sequence denoted by $(L_{k,n})_{n\in\mathbb{N}}$ in the existing literature (see e.g.\ \cite{FP09}). By means of Theorem~\ref{thm:arithmetic}, we then get
\begin{equation}\label{eq:kFibonacci}
 F_{k,mn+r} = L_{k,m}\, F_{k,m(n-1)+r}+(-1)^{m+1}F_{k,m(n-2)+r} \;\text{ for } n\ge 2.
\end{equation}
Moreover, the representation \eqref{eq:LucasTransform} gives the identity
\begin{equation*}
 L_{k,m} =\sum_{j=1}^m \frac{(j-1)!}{(m-1)!}B_{m,j}(1!k,2!, 0,\dots) 
 =\sum_{j=0}^{m-1} \frac{m}{m-j}\binom{m-j}{j}k^{m-2j}.
\end{equation*}
The recurrence relation \eqref{eq:kFibonacci} coincides with the one given in \cite[Lemma~3]{FP09}. It is easy to check that $L_{k,m}=F_{k,m-1}+F_{k,m+1}$.
\end{example}

\smallskip
\begin{remark}
An interesting consequence of Theorem~\ref{thm:arithmetic} is that the structure of the recurrence relation satisfied by any arithmetic subsequence of a given linear recurrence sequence with constant coefficients, only depends on the order of the given recurrence. For example, for any linear recurrence sequence $(a_n)$ of order 2 with coefficients $c_1, c_2$, we always have
\begin{equation*}
 a_{mn+r} = \hat a_m\, a_{m(n-1)+r} + (-1)^{m+1}c_2^m a_{m(n-2)+r} \;\text{ for } n\ge 2,
\end{equation*}  
and for a linear recurrence of order 3 with coefficients $c_1, c_2, c_3$, we get
\begin{equation*}
 a_{mn+r} = \hat a_m\, a_{m(n-1)+r} + \tfrac12(\hat a_{2m}-\hat a_m^{\,2})\, a_{m(n-2)+r} + c_3^m a_{m(n-3)+r}
 \;\text{ for } n\ge 3,
\end{equation*} 
where $(\hat a_n)$ is the Lucas transform of the coefficients of $(a_n)$. Thus the key is to understand the terms $\hat a_{m}, \hat a_{2m}, \dots, \hat a_{dm}$, for which the representation in terms of partial Bell polynomials may be useful.
\end{remark}

In order to illustrate the use of \eqref{eq:LucasTransform}, we now consider two examples of linear recurrence sequences of order three. They both use the following identity:
\begin{equation*}
 B_{m,j}(x_1,x_2,x_3, 0,\dots) 
 = \sum_{\ell=0}^j \tfrac{m!}{j!} \tbinom{j}{j-\ell} \tbinom{j-\ell}{m+\ell-2j}
 \left(\tfrac{x_1}{1!}\right)^{\ell}\left(\tfrac{x_2}{2!}\right)^{3j-m-2\ell}\left(\tfrac{x_3}{3!}\right)^{m-2j+\ell}.
\end{equation*}

\smallskip
\begin{example}[Tribonacci, {A000073} in \cite{Sloane}] \label{ex:tribonacci}
Let $(t_n)$ be defined by
\begin{gather*}
  t_0=t_1=0,\;\; t_2=1, \\
  t_n=t_{n-1}+t_{n-2}+t_{n-3} \;\text{ for } n\ge 3.
\end{gather*}
Theorem~\ref{thm:arithmetic} gives the recurrence relation
\begin{equation}\label{eq:tribonacci}
 t_{mn+r} = \hat t_m\, t_{m(n-1)+r}+\tfrac12(\hat t_{2m}-{\hat t_m}^{\,2}) t_{m(n-2)+r}
 + t_{m(n-3)+r} \;\text{ for } n\ge 3,
\end{equation}
where $(\hat t_m)$ is the Lucas transform of $(1,1,1)$. By \eqref{eq:LucasTransform}, we have 
\begin{equation*}
 \hat t_m =\sum_{j=1}^m \frac{(j-1)!}{(m-1)!}B_{m,j}(1!,2!,3!, 0,\dots) 
 =\sum_{j=0}^{m-1} \sum_{\ell=\lceil j/2\rceil}^{j} \frac{m}{m-j}\binom{m-j}{\ell} \binom{\ell}{j-\ell}.
\end{equation*}
This is sequence \cite[A001644]{Sloane} and can also be described by
\begin{equation*}
  \hat t_0=3,\; \hat t_1=1,\; \hat t_2=3, \,\text{ and }\,
  \hat t_n=\hat t_{n-1}+\hat t_{n-2}+\hat t_{n-3} \;\text{ for } n\ge 3.
\end{equation*}
The recurrence relation \eqref{eq:tribonacci} is consistent with the one obtained in \cite[Theorem~1]{IA13}.
\end{example}

\begin{example}[Padovan, {A000931} in \cite{Sloane}] \label{ex:Padovan}
Consider the sequence defined by
\begin{gather*}
  P_0=1, \; P_1=P_2=0, \\
  P_n=P_{n-2}+P_{n-3} \;\text{ for } n\ge 3.
\end{gather*}
Theorem~\ref{thm:arithmetic} gives the recurrence relation
\begin{equation}\label{eq:Padovan}
 P_{mn+r} = \widehat P_m\, P_{m(n-1)+r}+\tfrac12(\widehat P_{2m}-{\widehat P_m}^{\,2}) P_{m(n-2)+r}
 + P_{m(n-3)+r} \;\text{ for } n\ge 3,
\end{equation}
where $(\widehat P_n)$ is the Perrin sequence \cite[A001608]{Sloane}. It satisfies the same recurrence relation as $(P_n)$ but with initial values $\widehat P_0=3$, $\widehat P_1=0$, and $\widehat P_2=2$. Moreover, by \eqref{eq:LucasTransform}, we have
\begin{equation*}
 \widehat P_m =\sum_{j=1}^m \frac{(j-1)!}{(m-1)!}B_{m,j}(0,2!,3!, 0,\dots) 
 =\sum_{j=\lceil m/2\rceil}^{m-1} \frac{m}{m-j}\binom{m-j}{2j-m}.
\end{equation*}
\end{example}

\begin{example}[Narayana's cows sequence, {A000930} in \cite{Sloane}] \label{ex:Padovan}
Let $(N_n)$ be defined by
\begin{gather*}
  N_0=N_1=N_2=1, \\
  N_n=N_{n-1}+N_{n-3} \;\text{ for } n\ge 3.
\end{gather*}
Once again, by Theorem~\ref{thm:arithmetic}, we get the recurrence relation
\begin{equation}\label{eq:Narayana}
 N_{mn+r} = \widehat N_m\, N_{m(n-1)+r}+\tfrac12(\widehat N_{2m}-{\widehat N_m}^{\,2}) N_{m(n-2)+r}
 + N_{m(n-3)+r} \;\text{ for } n\ge 3,
\end{equation}
where 
\begin{equation*}
 \widehat N_m =\sum_{j=1}^m \frac{(j-1)!}{(m-1)!}B_{m,j}(1!,0,3!, 0,\dots) 
 =\sum_{j=0}^{\lfloor (m-1)/2 \rfloor} \frac{m}{m-2j} \binom{m-2j}{j}.
\end{equation*}
While $(N_n)$ counts the number of compositions of $n$ into parts 1 and 3, it can be shown that $(N_{3n-1})$ counts the number of $\binom{n+1}{2}$-color compositions of $n$. Since $\widehat N_3=4$ and $\widehat N_6=10$, this subsequence satisfies the relation
\begin{gather*}
 N_2=1,\; N_5=4, \; N_8=13, \\
 N_{3n+2} = 4 N_{3(n-1)+2}-3 N_{3(n-2)+2} + N_{3(n-3)+2} \;\text{ for } n\ge 3.
\end{gather*}
\end{example}

\medskip
We finish this section with a linear recurrence sequence of order 4 whose generating function has roots of multiplicity 2.
\begin{example}[Convolved Fibonacci, {A001629} in \cite{Sloane}]
Let $(a_n)$ be the sequence obtained by convolving the Fibonacci sequence with itself. This sequence can be described by
\begin{gather*}
  a_0=a_1=0,\;\, a_2=1, \; a_3=2,\\
  a_n=2a_{n-1}+a_{n-2}-2a_{n-3}-a_{n-4} \;\text{ for } n\ge 4.
\end{gather*}
In this case, the Lucas transform $\hat a_n$ of $(2,1,-2,-1)$ satisfies $\hat a_n = 2L_n$, where $(L_n)$ is the Lucas sequence \cite[A000032]{Sloane}. By Theorem~\ref{thm:arithmetic}, for $n\ge 4$ we then get
\begin{equation*}
 a_{mn} = \gamma_1 a_{m(n-1)}+ \gamma_2 a_{m(n-2)} + \gamma_3 a_{m(n-3)} +\gamma_4 a_{m(n-4)} 
\end{equation*}
with
\begin{align*}
 \gamma_1 &= \hat a_m = 2L_m, \quad  \gamma_4 = -1, \\
 \gamma_2 &= \tfrac12(\hat a_{2m}-\hat a_m^{2}) = L_{2m}-2L_m^{2} = 2(-1)^{m+1}-L_m^{2}, \\
 \gamma_3 &= \tfrac16(2\hat a_{3m}-3\hat a_m\hat a_{2m}+\hat a_m^{3})
 = \tfrac23(L_{3m}-3L_m L_{2m}+2L_m^{3}) = (-1)^m 2L_m.
\end{align*}
Here we have used the known identities $L_{2m}=L_m^2-2(-1)^m$ and $L_{3m}=L_m^3-3(-1)^m L_m$. In conclusion, for $n\ge 4$ we have
\begin{equation}
 a_{mn} = 2L_m\, a_{m(n-1)}-\big(2(-1)^m+L_m^{2}\big) a_{m(n-2)}+(-1)^{m}2L_m a_{m(n-3)} - a_{m(n-4)}.
\end{equation}
For the special cases $m=2,3,4,5$, we have $L_2=3$, $L_3=4$, $L_4=7$, $L_5=11$, and so
\begin{align*}
 a_{2n} &= 6 a_{2(n-1)} - 11 a_{2(n-2)} + 6 a_{2(n-3)} - a_{2(n-4)}, \\
 a_{3n} &= 8 a_{3(n-1)} - 14 a_{3(n-2)} - 8 a_{3(n-3)} - a_{3(n-4)}, \\
 a_{4n} &= 14 a_{4(n-1)} - 51 a_{4(n-2)} + 14 a_{4(n-3)} - a_{4(n-4)}, \\
 a_{5n} &= 22 a_{5(n-1)} - 119 a_{5(n-2)} - 22 a_{5(n-3)} - a_{5(n-4)}.
\end{align*}
\end{example}

\section{Sums of linear recurrence sequences}
\label{sec:sums}

For fixed $c_1,\dots,c_d$ with $c_d\not=0$, let
\begin{equation} \label{eq:charpolynomial}
 q(t) = 1-c_1t-c_2t^2-\dots -c_dt^d,
\end{equation}
and let $(y_n)$ be the sequence with generating function $Y(t)=1/q(t)$. Denoting $c_0=-1$, we then have
\[ 1 = q(t) Y(t) = \bigg(\!-\sum_{n=0}^d c_n t^n\bigg)\bigg(\sum_{n=0}^\infty y_n t^n\bigg), \]
which implies $\sum_{i=0}^n c_i y_{n-i} = 0$ for every $n\ge 1$. Therefore,
\begin{equation*}
 -1 = c_0 + \sum_{n=1}^d\bigg(\sum_{i=0}^n c_i y_{n-i}\bigg)
  = \sum_{n=0}^d \sum_{i=0}^n c_i y_{n-i} = \sum_{j=0}^d\bigg(\sum_{i=0}^j c_i\bigg) y_{d-j}
\end{equation*}
and so
\begin{equation}\label{eq:base}
 q(1) y_0 = -\bigg(\sum_{i=0}^d c_i\bigg) y_0 = 1+ \sum_{j=0}^{d-1}\bigg(\sum_{i=0}^j c_i\bigg) y_{d-j}
 = 1 + \sum_{j=0}^{d-1}\bigg(\sum_{i=0}^{d-1-j} c_i\bigg) y_{j+1}.
\end{equation}
This is the base case for the following statement.

\begin{proposition}
Let $(y_n)$ be the linear recurrence sequence with generating function $1/q(t)$, where $q(t) = 1-c_1t-c_2t^2-\dots -c_dt^d$ with $c_d\not=0$, and let $c_0=-1$. Then for $n\ge 0$,
\begin{equation} \label{eq:yn_sum}
q(1)\sum_{j=0}^n y_j = 1+\sum_{j=0}^{d-1}\bigg(\sum_{i=0}^{d-1-j} c_i\bigg) y_{n+j+1}.
\end{equation}
\end{proposition}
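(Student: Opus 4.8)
The plan is to prove \eqref{eq:yn_sum} by induction on $n$, with the base case $n=0$ already supplied by \eqref{eq:base}. It is convenient to abbreviate $\sigma_j=\sum_{i=0}^{d-1-j}c_i$ for $j=0,\dots,d-1$ and to consider the quantity
\begin{equation*}
 R_n = q(1)\sum_{j=0}^n y_j - \sum_{j=0}^{d-1}\sigma_j\, y_{n+j+1},
\end{equation*}
so that the proposition is equivalent to the assertion $R_n=1$ for all $n\ge 0$. The base case reads $R_0=1$, and it suffices to show that $R_n$ is independent of $n$, i.e.\ that $R_{n+1}-R_n=0$.

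First I would record the two elementary facts that drive the computation. From $c_0=-1$ we have $q(1)=-\sum_{i=0}^d c_i$, and the telescoping identity $\sigma_{j-1}-\sigma_j=c_{d-j}$ holds for $1\le j\le d-1$, together with the boundary values $\sigma_0=\sum_{i=0}^{d-1}c_i$ and $\sigma_{d-1}=c_0=-1$. Next I would compute the difference directly:
\begin{equation*}
 R_{n+1}-R_n = q(1)\,y_{n+1} - \sum_{j=0}^{d-1}\sigma_j\,(y_{n+j+2}-y_{n+j+1}).
\end{equation*}
Reindexing the $y_{n+j+2}$ terms by $j\mapsto j-1$ and collecting coefficients of each $y_{n+k+1}$ via the identities above, the coefficient of $y_{n+1}$ becomes $q(1)+\sigma_0=-c_d$, the coefficient of $y_{n+k+1}$ for $1\le k\le d-1$ becomes $-c_{d-k}$, and the coefficient of $y_{n+d+1}$ becomes $-\sigma_{d-1}=1$. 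After the substitution $\ell=d-k$ this is precisely
\begin{equation*}
 R_{n+1}-R_n = y_{n+d+1}-\sum_{\ell=1}^{d} c_\ell\, y_{n+d+1-\ell}.
\end{equation*}

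Finally, the right-hand side vanishes because $(y_n)$ satisfies its defining recurrence $\sum_{i=0}^{N}c_i\,y_{N-i}=0$; since $c_0=-1$ this reads $y_N=\sum_{\ell=1}^d c_\ell y_{N-\ell}$ for every $N\ge d$, and here $N=n+d+1\ge d$ for all $n\ge 0$. Hence $R_{n+1}=R_n=\dots=R_0=1$, which proves \eqref{eq:yn_sum}. I expect the only delicate point to be the bookkeeping of the index shifts and the boundary terms $j=0$ and $j=d-1$ when collecting coefficients; once the telescoping identity $\sigma_{j-1}-\sigma_j=c_{d-j}$ is in hand, the defining recurrence emerges exactly, and no genuine obstacle remains.
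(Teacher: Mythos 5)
Your proof is correct and follows essentially the same route as the paper: induction on $n$ with the base case \eqref{eq:base}, where the inductive step reduces, after reindexing and telescoping the partial sums $\sigma_j$, to the defining recurrence $y_{N}=\sum_{\ell=1}^{d}c_\ell y_{N-\ell}$ for $N\ge d$. Packaging the step as the statement $R_{n+1}-R_n=0$ is only a cosmetic reorganization of the paper's computation.
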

\begin{proof} 
We proceed by induction on $n$. The base case $n=0$ was established in \eqref{eq:base}. Assume that \eqref{eq:yn_sum} holds for $n-1$. Then
\begin{align*}
 q(1)\sum_{j=0}^n y_j &= q(1)\sum_{j=0}^{n-1} y_j + q(1) y_n 
 = 1+\sum_{j=0}^{d-1}\bigg(\sum_{i=0}^{d-1-j} c_i\bigg) y_{n+j} + q(1) y_n \\
 &= 1+\sum_{j=1}^{d-1}\bigg(\sum_{i=0}^{d-1-j} c_i\bigg) y_{n+j} - c_dy_n
 = 1+\sum_{j=0}^{d-2}\bigg(\sum_{i=0}^{d-2-j} c_i\bigg) y_{n+j+1} - c_dy_n\\
 &= 1+\sum_{j=0}^{d-2}\bigg(\sum_{i=0}^{d-1-j} c_i\bigg) y_{n+j+1} -\sum_{j=0}^{d-2} c_{d-1-j} y_{n+j+1} - c_dy_n\\
 &= 1+\sum_{j=0}^{d-2}\bigg(\sum_{i=0}^{d-1-j} c_i\bigg) y_{n+j+1} - y_{n+d}
 = 1+\sum_{j=0}^{d-1}\bigg(\sum_{i=0}^{d-1-j} c_i\bigg) y_{n+j+1}.
\end{align*}
Hence the identity \eqref{eq:yn_sum} holds for all $n\ge 0$.
\end{proof}

Let $q(t)$ be as in \eqref{eq:charpolynomial}.  For $\ell\in\{0,1,\dots,d-1\}$ we let $\big(y_n^{(\ell)}\big)$ be the linear recurrence sequence with generating function $Y_\ell(t)=t^\ell/q(t)$. Note that $(y_n^{(0)})$ is the sequence $(y_n)$ introduced above, and for $\ell>0$ we have 
\[ y_0^{(\ell)}=\dots=y_{\ell-1}^{(\ell)}=0 \;\text{ and }\; y_n^{(\ell)}=y_{n-\ell} \;\text{ for } n\ge \ell. \] 
Clearly, the sequences $(y_n^{(0)})$, $(y_n^{(1)})$, \dots, $(y_n^{(d-1)})$ form a basis for the space of all linear recurrence sequences of order $d$ with coefficients $c_1,\dots, c_d$. 

More precisely, if $(a_n)$ is a linear recurrence sequence satisfying $a_n = c_1a_{n-1}+\dots+c_da_{n-d}$ with initial values $a_0,\dots, a_{d-1}$, then
\begin{equation} \label{eq:lambdas}
\begin{gathered}
 a_n=\lambda_0 y_n^{(0)}+\dots+\lambda_{d-1} y_n^{(d-1)}, \text{ where} \\
 \lambda_0=a_0 \text{ and } \lambda_n=a_n-\sum_{j=1}^n c_j a_{n-j} \text{ for } n=1,\dots,d-1.
\end{gathered}
\end{equation}

\begin{theorem} \label{thm:an_sum}
Let $(a_n)$ be a linear recurrence sequence of order $d$ satisfying
\begin{equation*}
 a_n = c_1a_{n-1}+\dots+c_da_{n-d} \;\text{ for } n\ge d,
\end{equation*}
with initial values $a_0,\dots, a_{d-1}$, and let $c_0=-1$. For $n\ge 0$, we have
\begin{equation*}
 q(1)\sum_{j=0}^n a_j = \sum_{j=0}^{d-1}\bigg(\sum_{i=0}^{d-1-j} c_i\bigg) \big(a_{n+j+1}-a_{j}\big),
\end{equation*}
where $q(1)=1-c_1-\dots-c_d$.
\end{theorem}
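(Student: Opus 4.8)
The plan is to prove the identity by induction on $n$, reusing almost verbatim the computation already carried out for the Proposition. The crucial observation is that the inductive step in the proof of \eqref{eq:yn_sum} never used the initial values of $(y_n)$: every manipulation after the base case relied solely on the relation $\sum_{i=0}^d c_i a_{n-i}=0$ (equivalently $a_{n+d}=\sum_{k=1}^d c_k a_{n+d-k}$), which $(a_n)$ also satisfies. Writing $b_j=\sum_{i=0}^{d-1-j}c_i$, I would record the target in the form $q(1)\sum_{j=0}^n a_j = C + \sum_{j=0}^{d-1} b_j\,a_{n+j+1}$ with the $n$-independent constant $C=-\sum_{j=0}^{d-1} b_j\,a_j$; this is exactly the statement of the theorem, and the term $-\sum_j b_j a_j$ plays the role that the constant $1$ played in the Proposition.

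The induction step then comes for free: assuming the displayed form for $n-1$, adding $q(1)a_n$ and invoking the recurrence at index $n+d$ (valid since $n+d\ge d$ for every $n\ge 1$) turns $\sum_j b_j a_{n+j}$ into $\sum_j b_j a_{n+j+1}$ by the same reindexing and telescoping as in the Proposition, leaving $C$ untouched. Hence the only genuine content is the base case $n=0$, namely the identity
\[
 q(1)\,a_0 = \sum_{j=0}^{d-1} b_j\,(a_{j+1}-a_j).
\]

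For this base case I would use summation by parts. Abel's identity rewrites the right-hand side as $b_{d-1}a_d - b_0 a_0 + \sum_{j=1}^{d-1}(b_{j-1}-b_j)a_j$; here $b_{d-1}=c_0=-1$ and $b_{j-1}-b_j=c_{d-j}$, so after substituting $k=d-j$ the middle sum becomes $\sum_{k=1}^{d-1} c_k a_{d-k}$. Applying the defining recurrence at index $d$, that is $a_d=\sum_{k=1}^d c_k a_{d-k}$, collapses $-a_d+\sum_{k=1}^{d-1}c_k a_{d-k}$ to $-c_d a_0$; and since $b_0+c_d=\sum_{i=0}^d c_i=-q(1)$, the whole expression reduces to $q(1)a_0$, as required.

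I expect the main obstacle to be purely a matter of bookkeeping: keeping the index shifts in the summation by parts consistent and correctly identifying the boundary coefficients $b_0,b_{d-1}$ together with the differences $b_{j-1}-b_j=c_{d-j}$, and then applying the recurrence at the single index $d$. Conceptually there is an alternative route, since both sides are linear in the initial data: one could instead verify the identity on the basis $\big(y_n^{(\ell)}\big)$ of \eqref{eq:lambdas} and appeal to the Proposition. However, that approach requires separately handling the shift $y_n^{(\ell)}=y_{n-\ell}$ (with its boundary behavior for small $n$) and the auxiliary relation $\sum_{j=0}^{d-1} b_j y_j=-1$, so the direct induction above seems the most economical.
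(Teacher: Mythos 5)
Your proof is correct, and it takes a genuinely different route from the paper. The paper proves the theorem by expanding $a_n=\lambda_0 y_n^{(0)}+\dots+\lambda_{d-1}y_n^{(d-1)}$ in the basis of shifted {\sc invert}-transform sequences, applying the Proposition to each $\sum_j y_j^{(\ell)}$, and then evaluating $\sum_\ell \lambda_\ell=-\sum_{j=0}^{d-1}\big(\sum_{i=0}^{d-1-j}c_i\big)a_j$ to produce the constant term --- precisely the ``alternative route'' you mention and set aside at the end. Your argument instead runs the induction directly on $(a_n)$: you correctly observe that the inductive step in the Proposition uses only the recurrence $\sum_{i=0}^d c_i a_{n+d-i}=0$ and leaves the $n$-independent constant untouched, so the whole content is the base case $q(1)a_0=\sum_{j=0}^{d-1}b_j(a_{j+1}-a_j)$ with $b_j=\sum_{i=0}^{d-1-j}c_i$. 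Your Abel-summation verification of that base case checks out: $b_{d-1}=c_0=-1$, $b_{j-1}-b_j=c_{d-j}$, the reindexed middle sum is $\sum_{k=1}^{d-1}c_ka_{d-k}=a_d-c_da_0$ by the recurrence at index $d$, and $b_0+c_d=\sum_{i=0}^d c_i=-q(1)$ closes the computation. What each approach buys: the paper's proof recycles the Proposition wholesale and keeps the basis $\big(y_n^{(\ell)}\big)$ in view (which it uses elsewhere), at the cost of the slightly opaque identity for $\sum_\ell\lambda_\ell$ and the bookkeeping of the shifts $y_n^{(\ell)}=y_{n-\ell}$; your proof is self-contained, never needs the basis or the $\lambda_\ell$, and in fact subsumes the Proposition as the special case where the constant $-\sum_j b_jy_j$ evaluates to $1$ (which is a rearrangement of \eqref{eq:base}). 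Both are valid; yours is arguably the more economical derivation of the theorem on its own, while the paper's organization amortizes the work across the Proposition and the basis machinery.
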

\begin{proof}
We start by writing $a_j=\lambda_0 y_j^{(0)}+\dots+\lambda_{d-1} y_j^{(d-1)}$ as in \eqref{eq:lambdas}. Thus
\begin{equation*}
 q(1)\sum_{j=0}^n a_j = q(1)\sum_{j=0}^n \sum_{\ell=0}^{d-1} \lambda_\ell y_{j}^{(\ell)}
 = q(1)\sum_{\ell=0}^{d-1} \lambda_\ell \bigg(\sum_{j=\ell}^n y_{j-\ell}\bigg)
 = \sum_{\ell=0}^{d-1} \lambda_\ell \bigg(q(1)\sum_{j=0}^{n-\ell} y_{j}\bigg), 
\end{equation*}
which by \eqref{eq:yn_sum} becomes
\begin{align*}
 q(1)\sum_{j=0}^n a_j
 &= \sum_{\ell=0}^{d-1} \lambda_\ell 
 \bigg(1+\sum_{j=0}^{d-1}\bigg(\sum_{i=0}^{d-1-j} c_i\bigg) y_{n+j+1-\ell}\bigg) \\
 &= \sum_{\ell=0}^{d-1} \lambda_\ell 
 + \sum_{\ell=0}^{d-1} \lambda_\ell \sum_{j=0}^{d-1}\bigg(\sum_{i=0}^{d-1-j} c_i\bigg) y_{n+j+1-\ell} \\
 &= \sum_{\ell=0}^{d-1} \lambda_\ell 
 + \sum_{j=0}^{d-1}\bigg(\sum_{i=0}^{d-1-j} c_i\bigg) \sum_{\ell=0}^{d-1} \lambda_\ell y_{n+j+1-\ell} 
 = \sum_{\ell=0}^{d-1} \lambda_\ell + \sum_{j=0}^{d-1} \bigg(\sum_{i=0}^{d-1-j} c_i\bigg) a_{n+j+1}.
\end{align*}
Now, by means of \eqref{eq:lambdas}, we have
\begin{equation*}
  \sum_{\ell=0}^{d-1} \lambda_\ell = -\sum_{j=0}^{d-1}\Big(\sum_{i=0}^j c_i\Big) a_{d-1-j}
  = -\sum_{j=0}^{d-1}\Big(\sum_{i=0}^{d-1-j} c_i\Big) a_{j},
\end{equation*}
and therefore,
\begin{equation*}
 q(1)\sum_{j=0}^n a_j = \sum_{j=0}^{d-1}\bigg(\sum_{i=0}^{d-1-j} c_i\bigg) \big(a_{n+j+1}-a_{j}\big),
\end{equation*}
as claimed.
\end{proof}

\begin{example}[$d$-step Fibonacci]
Let $d\in\mathbb{N}$ with $d\ge 2$. Let $(f^{(d)}_n)$ be defined by
\begin{equation*}
  f^{(d)}_0=\dots=f^{(d)}_{d-2}=0,\; f^{(d)}_{d-1}=1, \;\; f^{(d)}_n=f^{(d)}_{n-1}+\dots+f^{(d)}_{n-d} \;\text{ for } n\ge d.
\end{equation*}
By Theorem~\ref{thm:an_sum},
\begin{equation*}
 \sum_{j=0}^n f^{(d)}_j = \tfrac{1}{1-d}\sum_{j=0}^{d-1}(d-2-j) \Big(f^{(d)}_{n+j+1}-f^{(d)}_{j}\Big)
 = \tfrac{1}{1-d}\bigg(\sum_{j=0}^{d-1}(d-2-j) f^{(d)}_{n+j+1} + 1\bigg).
\end{equation*}
\end{example}

\begin{example}[$d$-step Lucas]
Let $d\in\mathbb{N}$ with $d\ge 2$. Let $(\ell^{(d)}_n)$ be the L-sequence associated with $(f^{(d)}_n)$. It satisfies the recurrence relation
\begin{gather*}
  \ell^{(d)}_0=d, \; \ell^{(d)}_{j}=2^j-1 \text{ for } j=1,\dots, d-1, \\
  \ell^{(d)}_n=\ell^{(d)}_{n-1}+\dots+\ell^{(d)}_{n-d} \;\text{ for } n\ge d.
\end{gather*}
By Theorem~\ref{thm:an_sum}, 
\begin{equation*}
 \sum_{j=0}^n \ell^{(d)}_j = \tfrac{1}{1-d}\sum_{j=0}^{d-1}(d-2-j) \Big(\ell^{(d)}_{n+j+1}-\ell^{(d)}_{j}\Big), 
\end{equation*}
which can be written as 
\begin{equation}
 \sum_{j=0}^n \ell^{(d)}_j = \tfrac{1}{1-d}\bigg(\sum_{j=0}^{d-1}(d-2-j) \ell^{(d)}_{n+j+1} - \frac{d(d-3)}{2} \bigg).
\end{equation}
In particular, for $d=2$ and $d=3$, we get
\begin{equation*}
 \sum_{j=0}^n \ell^{(2)}_j = \ell^{(2)}_{n+2} -1 \;\text{ and }\; 
 \sum_{j=0}^n \ell^{(3)}_j = \tfrac12\big(\ell^{(3)}_{n+3}-\ell^{(3)}_{n+1}\big)
 = \tfrac12\big(\ell^{(3)}_{n+2}+\ell^{(3)}_{n}\big),
\end{equation*}
which are sequences A001610 and A073728 in \cite{Sloane}, and for $d=4$,
\begin{equation*}
 \sum_{j=0}^n \ell^{(4)}_j = \tfrac13\big(\ell^{(4)}_{n+3}-\ell^{(4)}_{n+1} + \ell^{(4)}_{n}+2\big).
\end{equation*}
\end{example}

\medskip
\subsection*{Subsequences with indices in arithmetic progression}

As discussed in Theorem~\ref{thm:arithmetic}, given a linear recurrence sequence $(a_n)$ with constant coefficients, any subsequence of the form $(a_{mn+r})_{n\in\mathbb{N}}$ also satisfies a linear recurrence relation with constant coefficients that depend on $(\hat a_n)$, the Lucas transform of the coefficients of $(a_n)$. Consequently, Theorem~\ref{thm:an_sum} may be used to derive, in a straightforward manner, formulas for the sums $\sum_{j=0}^n a_{mj+r}$.

In order to illustrate the combined use of these theorems, we will discuss some examples for linear recurrences of order two and three. The higher the order of $(a_n)$, the more terms of the associated sequence $(\hat a_n)$ are required to find the coefficients of the recurrence relation satisfied by $(a_{mn+r})_{n\in\mathbb{N}}$. However, the number of terms needed is one less than the order. More precisely, if the order of $(a_n)$ is $d$, we will only need to compute $\hat a_m, \hat a_{2m},\dots, \hat a_{(d-1)m}$.  

\begin{example}[Linear recurrences of order 2] 
Let $(a_n)$ be defined by
\begin{equation*}
 a_n = c_1a_{n-1}+c_2a_{n-2} \;\text{ for } n\ge 2,
\end{equation*}  
with initial values $a_0$ and $a_1$. By Theorem~\ref{thm:arithmetic}, we know
\begin{equation*}
 a_{mn+r} = \hat a_m\, a_{m(n-1)+r} + (-1)^{m+1}c_2^m a_{m(n-2)+r} \;\text{ for } n\ge 2,
\end{equation*}  
where $\hat a_m$ is given by
\begin{equation*}
 \hat a_m = \sum_{j=1}^m \frac{(j-1)!}{(m-1)!}B_{m,j}(1!c_1,2!c_2,0,\dots)
 = \sum_{j=0}^{m-1} \frac{m}{m-j} \binom{m-j}{j} c_1^{m-2j}c_2^{j}.
\end{equation*}
Moreover, by Theorem~\ref{thm:an_sum}, 
\begin{align*}
 \sum_{j=0}^n a_{mj+r} &= \frac{\big(a_{m(n+2)+r}-a_{m+r}\big) 
 - (\hat a_{m}-1)\big(a_{m(n+1)+r} - a_{r}\big)}{\hat a_{m}+(-1)^{m+1}c_2^m - 1} \\
 &= \frac{a_{m(n+1)+r}-(-1)^{m}c_2^m\, a_{mn+r} + (\hat a_{m}-1)a_{r} - a_{m+r}}{\hat a_{m}-(-1)^{m}c_2^m - 1}.
\end{align*}

\medskip
For the special case of the $k$-Fibonacci sequence (cf.\ Example~\ref{ex:kFibonacci}), we get
\begin{align*}
 \sum_{j=0}^n F_{k,mj+r}
 &= \frac{F_{k,m(n+1)+r}-(-1)^{m}F_{k,mn+r} + (L_{k,m}-1)F_{k,r} - F_{k,m+r}}{L_{k,m}-(-1)^{m} - 1},
\intertext{and for the $k$-Lucas sequence, we have}
 \sum_{j=0}^n L_{k,mj+r} 
 &= \frac{L_{k,m(n+1)+r}-(-1)^{m}L_{k,mn+r} + (L_{k,m}-1)L_{k,r} - L_{k,m+r}}{L_{k,m}-(-1)^{m} - 1}.
\end{align*}
These formulas are consistent with the ones given in \cite{Falcon12,FP09}.
\end{example}

\begin{example}[Linear recurrences of order 3]
Let $(a_n)$ be defined by
\begin{equation*}
 a_n = c_1a_{n-1}+c_2a_{n-2}+c_3a_{n-3} \;\text{ for } n\ge 3,
\end{equation*}  
with initial values $a_0$, $a_1$, and $a_2$. By Theorem~\ref{thm:arithmetic}, we have
\begin{equation*}
 a_{mn+r} = \hat a_m\, a_{m(n-1)+r} + \tfrac12(\hat a_{2m}-\hat a_m^{\,2})\, a_{m(n-2)+r} + c_3^m a_{m(n-3)+r}
 \;\text{ for } n\ge 3,
\end{equation*} 
where $\hat a_m = \sum_{j=1}^m \tfrac{(j-1)!}{(m-1)!}B_{m,j}(1!c_1,2!c_2,3!c_3,0,\dots)$. Theorem~\ref{thm:an_sum} then gives
\begin{equation} \label{eq:sum3recurrence}
 \hat q(1)\sum_{j=0}^n a_{mj+r} 
 = \sum_{j=0}^{2}\bigg(\sum_{i=0}^{2-j} \hat c_i\bigg) \big(a_{m(n+j+1)+r}-a_{mj+r}\big),
\end{equation}
where $\hat c_0=-1$, $\hat c_1=\hat a_m$, $\hat c_2=\tfrac12(\hat a_{2m}-\hat a_m^{\,2})$, and $\hat q(1) = 1-\hat a_m - \tfrac12(\hat a_{2m}-\hat a_m^{\,2})-c_3^m$.

\medskip
For the special case of the Tribonacci sequence (cf.\ Example~\ref{ex:tribonacci})
\begin{equation*}
  t_0=t_1=0,\; t_2=1, \;\; t_n=t_{n-1}+t_{n-2}+t_{n-3} \;\text{ for } n\ge 3,
\end{equation*}
the above formula \eqref{eq:sum3recurrence} gives
\begin{equation*}
 \sum_{j=0}^n t_{mj+r} = \frac{t_{m(n+1)+r} + \big(1+\tfrac12(\hat t_{2m}-\hat t_m^{\,2})\big) t_{mn+r} 
 + t_{m(n-1)+r} +I_{m,r}}{\hat t_m + \tfrac12(\hat t_{2m}-\hat t_m^{\,2})},
\end{equation*}
where $I_{m,r}=\big(\hat t_m+\tfrac12(\hat t_{2m}-\hat t_m^{\,2})-1\big)t_{r} + (\hat t_m-1)t_{m+r} - t_{2m+r}$.
Here are a few values of the sequences $(t_n)$ and $(\hat t_n)$, taken from \cite{Sloane}:
\begin{align*}
\text{(A000073) } t_n: & \quad 0, 0, 1, 1, 2, 4, 7, 13, 24, 44, 81, 149, 274, 504, 927, \dots \\
\text{(A001644) } \hat t_n: & \quad 3, 1, 3, 7, 11, 21, 39, 71, 131, 241, 443, 815, 1499, 2757, \dots
\end{align*}
Tribonacci numbers have been extensively studied, and some special cases of the above formula can be found in the literature, see e.g.\ \cite{Kilic08} and \cite[Theorem~3]{IA13}.

\medskip
We finish this section with a short list of particular instances of the above sum. 
\begin{equation*}
  \sum_{j=0}^n t_j = \tfrac{1}{2}\big(t_{n+2}+t_{n}-1\big),
\end{equation*}
\begin{alignat*}{2} 
 \sum_{j=0}^n t_{2j} &= \tfrac12\big(t_{2n+1} + t_{2n}\big), & 
   \sum_{j=0}^n t_{2j+1} &= \tfrac12\big(t_{2n+2} + t_{2n+1} -1\big), \\
\sum_{j=0}^n t_{3j} &= \tfrac12\big(t_{3n+2} - t_{3n} -1\big), \quad & 
   \sum_{j=0}^n t_{4j} &= \tfrac14\big(t_{4n+2} + t_{4n} - 1\big),
\end{alignat*}
\begin{align*}
 \sum_{j=0}^n t_{5j+r} &= \tfrac{1}{22}\big(t_{5n+2+r} + 8t_{5n+1+r} + 5t_{5n+r} + I_{r}\big),
\end{align*}
where $I_0=-1$, $I_1=-9$, $I_2=7$, $I_3=-3$, and $I_4=-5$.
\end{example}

\bigskip
\subsection*{\sc Acknowledgement}
The authors would like to thank James Sellers for bringing sequences with indices in arithmetic progression to their attention.


\end{document}